
\documentclass{amsart}
\usepackage{eurosym}
\usepackage{graphicx,amscd,color,amsmath,amsfonts,amssymb,geometry}

\setcounter{MaxMatrixCols}{10}

\newtheorem{theorem}{Theorem}[section]
\newtheorem{proposition}[theorem]{Proposition}

\newtheorem{remark}[theorem]{Remark}

\newtheorem{lemma}[theorem]{Lemma}

\numberwithin{equation}{section}

\geometry{left=2.5cm,right=2.5cm,top=3cm,bottom=3cm,headheight=3mm,paper=a4paper}

\begin{document}
\title[Polynomial and Multilinear Hardy--Littlewood Inequalities]{Polynomial
and Multilinear Hardy--Littlewood Inequalities: Analytical and Numerical
Approaches}
\author[Campos]{J. Campos}
\address{Departamento de Ci\^{e}ncias Exatas\\
\indent Universidade Federal da Para\'{\i}ba \\
\indent 58.297-000 - Rio Tinto, Brazil.}
\email{jamilson@dce.ufpb.b}
\author[Cavalcante]{W. Cavalcante}
\address{Departamento de Matem\'{a}tica\\
\indent  Universidade Federal de Pernambuco\\
\indent50.740-560 - Recife, Brazil.}
\email{wasthenny.wc@gmail.com}
\author[F\'{a}varo]{V. F\'{a}varo}
\address{Faculdade de Matem\'{a}tica\\
\indent  Universidade Federal de Uberl\^{a}ndia\\
\indent 38.400-902 - Uberl\^{a}ndia, Brazil.}
\email{vvfavaro@gmail.com}
\author[N\'{u}\~{n}ez]{D. N\'{u}\~{n}ez-Alarc\'{o}n}
\address{Departamento de Matem\'{a}tica\\
\indent  Universidade Federal de Pernambuco\\
\indent 50.740-560 - Recife, Brazil.}
\email{danielnunezal@gmail.com}
\author[Pellegrino]{D. Pellegrino}
\address{Departamento de Matem\'{a}tica \\
\indent Universidade Federal da Para\'{\i}ba \\
\indent 58.051-900 - Jo\~{a}o Pessoa, Brazil.}
\email{pellegrino@pq.cnpq.br and dmpellegrino@gmail.com}
\author[Serrano]{D. M. Serrano-Rodr\'iguez}
\address{Departamento de Matem\'{a}tica\\
\indent  Universidade Federal de Pernambuco\\
\indent 50.740-560 - Recife, Brazil.}
\email{dmserrano0@gmail.com}
\thanks{J. Campos was supported by a CAPES Postdoctoral scholarship. V. V. F%
\'{ }avaro was supported by FAPEMIG Grants CEX-APQ-01409-12, PPM-00086-14
and CNPq Grants 482515/2013-9, 307517/2014-4. W. Cavalcante is supported by
Capes. D. N\'{u}\~{n}ez and D.M. Serrano were supported by CNPq Grant
461797/2014-3. D. Pellegrino was supported by CNPq}
\subjclass[2010]{46G25, 47L22, 47H60.}
\keywords{Absolutely summing operators; Hardy--Littlewood inequality;
Bohnenblust--Hille inequality.}
\maketitle

\begin{abstract}
We investigate the growth of the polynomial and multilinear
Hardy--Littlewood inequalities. Analytical and numerical approaches are
performed and, in particular, among other results, we show that a simple
application of the best known constants of the Clarkson inequality improves
a recent result of Araujo et al. We also obtain the optimal constants of the
generalized Hardy--Littlewood inequality in some special cases.
\end{abstract}

\section{Introduction}

\bigskip The investigation of polynomials and multilinear operators acting
on Banach spaces is a fruitful topic of investigation that dates back to the
$30^{\prime }s$ (see, for instance \cite{bh, hardy, LLL} and, for recent
papers, \cite{rueda, bayy, botelho, dimant, garcia} among many others).

Let $\mathbb{K}$ be the real or complex scalar field, and$\,n\geq 1$ be a
positive integer. In 1930 Littlewood proved his well-known $4/3$ inequality
to solve a problem posed by P.J. Daniell (see \cite{LLL}). The Littlewood's $%
4/3$ inequality asserts that
\begin{equation*}
\left( \sum\limits_{i,j=1}^{n}\left\vert T(e_{i},e_{j})\right\vert ^{\frac{4%
}{3}}\right) ^{\frac{3}{4}}\leq \sqrt{2}\left\Vert T\right\Vert
\end{equation*}%
for all positive integers $n$ and every continuous bilinear form $%
T:c_{0}\times c_{0}\rightarrow \mathbb{K}$, where\linebreak $\Vert T\Vert
:=\sup_{z^{(1)},z^{(2)}\in B_{c_{0}}}|T(z^{(1)},z^{(2)})|$. The exponent $%
4/3 $ is optimal and in the case $\mathbb{K}=\mathbb{R}$ the optimality of
the constant $\sqrt{2}$ is also known (see \cite{diniz}). Soon afterwards
this inequality was generalized by Hardy and Littlewood (\cite{hardy}, 1934)
for bilinear forms on $\ell _{p}$ and, in 1982 Praciano-Pereira (\cite{pra})
extended the result of Hardy and Littlewood to $m$-linear forms on $\ell
_{p} $. Another generalization of the Hardy--Littlewood inequalities for $m$%
-linear forms was obtained by Dimant and Sevilla-Peris, and will be treated
in Remark \ref{988}.

\bigskip The Hardy--Littlewood inequalities for $m$-linear forms is the
following result:

\bigskip

\textbf{Theorem (Hardy--Littlewood/Praciano-Pereira).} Let $m\geq 2$ be a
positive integer. For $p\geq 2m,$ there is a constant $C_{\mathbb{K}%
,m,p}\geq 1$ such that%
\begin{equation*}
\left( \sum_{i_{1},...,i_{m}=1}^{n}\left\vert T(e_{i_{1}},\ldots
,e_{i_{m}})\right\vert ^{\frac{2mp}{mp+p-2m}}\right) ^{\frac{mp+p-2m}{2mp}%
}\leq C_{\mathbb{K},m,p}\left\Vert T\right\Vert ,
\end{equation*}

for all positive integers $n$ and all $m$-linear forms $T:\ell
_{p}^{n}\times \cdots \times \ell _{p}^{n}\rightarrow \mathbb{K}$.

The exponent $\frac{2mp}{mp+p-2m}$ is optimal and $\Vert T\Vert
:=\sup_{z^{(1)},...,z^{(m)}\in B_{\ell _{p}^{n}}}|T(z^{(1)},...,z^{(m)})|\,$%
. In the limiting case ($p=\infty ,$ considering, of course $f(\infty
):=\lim_{p\rightarrow \infty }f(p)$ regardless of the function $f$), we
recover the classical multilinear Bohnenblust--Hille inequality (see \cite%
{bh}). The original upper estimate for $C_{\mathbb{K},m,p}$ is $2^{\frac{m-1%
}{2}}$. Recently, in some papers (see \cite{adv22, ap, ooo}), this estimate
was improved for all $m$ and $p$ with the only exception of the case $C_{%
\mathbb{R},m,2m}$.

The precise behavior of the growth of the optimal constants $C_{\mathbb{K}%
,m,p}$ is still unknown (some partial results can be found in \cite{laa,
adv22, ap}).

Up to now, the best known lower estimates for $C_{\mathbb{R},m,p}$ are
always smaller than $2$ and again the more critical situation is when $p=2m$%
, where the lower estimates presented in \cite{laa} are more difficult to
obtain and not explicitly stated for the case $p=2m$.

In view of the special role played by the constants $C_{\mathbb{R},m,2m}$
and since this case is a kind of dual version of the classical
Bohnenblust--Hille inequality (see details in Section \ref{890}), in the
Sections 3 and 4 we investigate this critical case and obtain quite better
lower estimates. Our approach has two novelties: a new class of multilinear
forms, not investigated before in similar context, and a new numerical
approach in this framework. As it will be clear along the paper the new
family of multilinear forms introduced in this paper is more effective to
obtain good lower estimates for the Hardy--Littlewood inequality.

In Section \ref{ggee} we investigate the generalized Hardy--Littlewood
inequality. Our approach provides new lower bounds for this inequality. As a
consequence of our results, in Theorem \ref{optimal} we obtain optimal
constants for some cases of three-linear forms.

In Section \ref{ii1} we investigate the polynomial Hardy--Littlewood
inequality. The approaches of Sections \ref{ggee} and \ref{ii1} are entirely
analytic and do not depend on computation assistance.

\section{ The multilinear Hardy--Littlewood inequality\label{890}}

\bigskip

From now on, if $p\in \left( 1,2\right) $, $p^{\ast }$ is the extended real
number such that $\frac{1}{p}+\frac{1}{p^{\ast }}=1.$ Also, $E^{\prime }$
denotes the topological dual of a Banach space $E$. By $\mathcal{L}\left(
^{m}E;F\right) $ we denote the Banach space of all (bounded) $m$-linear
operators $U:E\times \cdots \times E\rightarrow F$, with $E$, $F~$Banach
spaces over $\mathbb{K}$. For $1\leq s\leq r<\infty ,\,U\in \mathcal{L}%
\left( ^{m}E;F\right) $ is called \emph{multiple $(r,s)$-summing}, if there
exists a constant $C>0$ such that
\begin{equation*}
\left( \sum_{i_{1},\dots ,i_{m}=1}^{n}\left\Vert U\left( x_{i_{1}},\dots
,x_{i_{m}}\right) \right\Vert _{F}^{r}\right) ^{\frac{1}{r}}\leq C\left\Vert
U\right\Vert \prod_{k=1}^{m}\left\Vert \left( x_{i_{k}}\right)
_{i_{k}=1}^{n}\right\Vert _{w,s}
\end{equation*}%
for all finite choice of vectors $x_{i_{k}}\in E,\,1\leq i_{k}\leq n,\,1\leq
k\leq m$, where%
\begin{equation*}
\Vert \left( x_{i}\right) _{i=1}^{n}\Vert _{w,s}:=\sup_{\Vert \varphi \Vert
_{E^{\prime }}\leq 1}\left( \sum_{i=1}^{n}|\varphi (x_{i})|^{s}\right) ^{%
\frac{1}{s}}.
\end{equation*}%
The vector space of all multiple $(r,s)$-summing operators in $\mathcal{L}%
\left( ^{m}E;F\right) $ is denoted by $\Pi _{(r,s)}\left( ^{m}E;F\right) $.
For more details of the theory of multiple summing operators theory see \cite%
{matos, santosp, per}.

In the terminology of the multiple summing operators, it is well known (see,
for instance, \cite[Section 5]{dimant}) that the
Hardy--Littlewood/Praciano-Pereira inequality is equivalent to the equality

\begin{equation*}
\Pi _{\left( \frac{2mp}{mp+p-2m};p^{\ast }\right) }(^{m}E;\mathbb{K})=%
\mathcal{L}(^{m}E;\mathbb{K}).
\end{equation*}%
In other words, if $m\geq 2$ and $p\geq 2m$, then there is a constant $C_{%
\mathbb{K},m,p}\geq 1$ such that%
\begin{equation*}
\left( \sum_{i_{1},...,i_{m}=1}^{n}\left\vert T(x_{i_{1}},\ldots
,x_{i_{m}})\right\vert ^{\frac{2mp}{mp+p-2m}}\right) ^{\frac{mp+p-2m}{2mp}%
}\leq C_{\mathbb{K},m,p}\left\Vert T\right\Vert
\prod\limits_{k=1}^{m}\left\Vert \left( x_{i_{k}}\right)
_{i_{k}=1}^{n}\right\Vert _{w,p^{\ast }}
\end{equation*}%
for all $m$-linear forms $T:E\times \cdots \times E\rightarrow \mathbb{K}$,
for all finite choice of vectors $x_{i_{k}}\in E,\,1\leq i_{k}\leq n,\,1\leq
k\leq m$.

As mentioned in the introduction, the case $p=2m$ in the Hardy--Littlewood
inequality is specially interesting. In this case we have very few
information on the constants involved, and moreover, this case is a kind of
dual version of the Bohnenblust--Hille inequality, in the sense that in the
pair of parameters $\left( \frac{2mp}{mp+p-2m};p^{\ast }\right) $, each case
has a coordinate which is kept constant (in reverse location). More
specifically, in the terminology of the multiple summing operators, the
Bohnenblust--Hille inequality asserts that
\begin{equation*}
\Pi _{\left( \frac{2m}{m+1};1\right) }(^{m}E;\mathbb{K})=\mathcal{L}(^{m}E;%
\mathbb{K})
\end{equation*}%
for all Banach spaces $E$. On the other hand, when $p=2m$, the
Hardy--Littlewood inequality is equivalent to
\begin{equation*}
\Pi _{\left( 2;\frac{2m}{2m-1}\right) }(^{m}E;\mathbb{K})=\mathcal{L}(^{m}E;%
\mathbb{K})
\end{equation*}%
for all Banach spaces $E$.

Up to now the best known upper estimates for the constants $\left( C_{%
\mathbb{R},m,p}\right) _{m=1}^{\infty }$ can be found in \cite[page 1887]{ap}
and \cite{ooo}. The updated results on the lower bounds for these constants
are:

\begin{itemize}
\item $C_{\mathbb{R},m,p}\geq 2^{\frac{mp+2m-2m^{2}-p}{mp}}$ for $p>2m$ and $%
C_{\mathbb{R},m,p}>1$ for $p=2m$ (see \cite{laa});
\end{itemize}

From now on $p^{\ast }$ denotes the conjugate number of $p$. In this section
we find an overlooked (and simple) connection between the Clarkson's
inequalities and the Hardy--Littlewood's constants which helps to find
analytical lower estimates (without the use of a computational aid) for
these constants.

\begin{theorem}
\label{popkhtr} Let $m\geq 2$ and $p\geq 2m$. The optimal constants of the
Hardy-Littlewood inequalities satisfies
\begin{equation*}
C_{\mathbb{R},m,p}\geq \frac{2^{\frac{2mp+2m-p-2m^{2}}{mp}}}{\sup_{x\in
\lbrack 0,1]}\frac{((1+x)^{p^{\ast }}+(1-x)^{p^{\ast }})^{\frac{1}{p^{\ast }}%
}}{{(1+x^{p})^{1/p}}}}.
\end{equation*}
\end{theorem}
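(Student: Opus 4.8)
The plan is to exhibit, for each $n$, an explicit $m$-linear form $T\colon\ell_p^{2^n}\times\cdots\times\ell_p^{2^n}\to\mathbb{R}$ together with a choice of input vectors that makes the left-hand side of the Hardy--Littlewood inequality large relative to $\|T\|$ times the weak-$p^\ast$ norms, and then to optimize. The natural candidates are built from Kahane--Salem--Zygmund type / Littlewood-matrix constructions: take $T$ to be (an iterated tensor of) the bilinear form associated with a $\pm1$ matrix of order $2$, e.g. $T(x,y)=x_1y_1+x_1y_2+x_2y_1-x_2y_2$, tensorized $m$ times, or more precisely the $m$-linear form whose coefficient tensor is the $m$-fold tensor product of the $2\times2$ Hadamard-type matrix. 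For such $T$ the quantity $\sum_{i_1,\dots,i_m}|T(e_{i_1},\dots,e_{i_m})|^{q}$ with $q=\frac{2mp}{mp+p-2m}$ is computed exactly because all coefficients are $\pm1$, giving $(2^m)^{m/ \text{(something)}}$; and crucially $\|T\|$ is estimated from above by a multiplicative/tensor argument reducing to the norm of the single $2\times2$ block acting on $\ell_p^2$.

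The key point, and the reason Clarkson enters, is the evaluation of the norm of the basic $2\times2$ block $B(x,y)=x_1y_1+x_1y_2+x_2y_1-x_2y_2$ as a bilinear form on $\ell_p^2\times\ell_p^2$ (or the relevant $m$-linear analogue). Writing $B(x,y)=(x_1+x_2)y_1+(x_1-x_2)y_2$, for fixed $x$ the supremum over $y\in B_{\ell_p^2}$ is $\big(|x_1+x_2|^{p^\ast}+|x_1-x_2|^{p^\ast}\big)^{1/p^\ast}$, and then maximizing over $x\in B_{\ell_p^2}$, by homogeneity and symmetry one may take $x=(1,x)/(1+x^p)^{1/p}$ with $x\in[0,1]$, producing exactly the quantity
\[
\sup_{x\in[0,1]}\frac{\big((1+x)^{p^\ast}+(1-x)^{p^\ast}\big)^{1/p^\ast}}{(1+x^p)^{1/p}}
\]
in the denominator of the claimed bound. (This supremum is precisely what Clarkson's inequalities control; the connection advertised in the paragraph preceding the theorem is that one substitutes the best known constants/values from Clarkson's inequalities at this step rather than using a cruder estimate of $\|B\|$.) The exponent bookkeeping — plugging $q=\frac{2mp}{mp+p-2m}$, counting that the tensor power has $4^m=2^{2m}$ nonzero coefficients over an index set of size $2^m\times\cdots$, and simplifying the exponents of $2$ — is what yields the numerator $2^{\frac{2mp+2m-p-2m^2}{mp}}$; this is routine but must be done carefully.

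So the steps, in order, are: (1) define the $m$-linear form $T_m$ as the $m$-fold tensor of the $2\times2$ block $B$, and the test vectors $x_{i_k}=e_{i_k}$; (2) compute $\Big(\sum_{i_1,\dots,i_m}|T_m(e_{i_1},\dots,e_{i_m})|^{q}\Big)^{1/q}$ exactly, since every coefficient is $\pm1$; (3) compute $\|(e_{i_k})_{i_k}\|_{w,p^\ast}$ for the canonical basis of $\ell_p^{2}$ (it equals $2^{1/p^\ast}$ raised to the appropriate power, or is simply bounded by a constant, and tensorizes across the $m$ factors); (4) bound $\|T_m\|\le \|B\|^m$ by the multiplicativity of the sup-norm under tensor products of bilinear/multilinear forms on $\ell_p$ spaces; (5) evaluate $\|B\|$ exactly as the Clarkson-type supremum displayed above; (6) assemble: the ratio of the left side to $\|T_m\|$ times the weak norms is a lower bound for $C_{\mathbb{R},m,p}$, and simplifying the powers of $2$ gives the stated inequality.

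The main obstacle I anticipate is step (4): justifying the upper estimate $\|T_m\|\le\|B\|^m$ (equivalently, that the $\ell_p$-norm of a tensor product of multilinear forms is submultiplicative). This is where care is needed because, unlike the case of Hilbert space or $\ell_\infty$, the relevant inequality for $\ell_p$ with $p$ finite is not completely automatic; one typically argues by applying, coordinate-block by coordinate-block, Hölder's inequality with exponents $p$ and $p^\ast$ to peel off one factor at a time, or invokes the known behavior of the projective/injective tensor norm on $\ell_p^n$. A secondary, purely computational hazard is step (6): it is easy to slip a sign or a factor of $m$ in the exponent $\frac{2mp+2m-p-2m^2}{mp}$, so the exponent arithmetic should be checked by specializing to $m=2$ (Littlewood) and to $p=\infty$ (Bohnenblust--Hille), where the answer is classical.
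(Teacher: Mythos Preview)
Your step (5) --- identifying $\|B\|$ on $\ell_p^2\times\ell_p^2$ with the Clarkson-type supremum via the isometry $\mathcal{L}(^2\ell_p^2;\mathbb{R})\cong\mathcal{L}(\ell_p^2;\ell_{p^\ast}^2)$ --- is exactly the key observation, and the paper does precisely this. The gap is in step (1): the $m$-fold tensor power $B^{\otimes m}$ of a \emph{bilinear} form is a $2m$-linear form on $(\ell_p^2)^{2m}$, not an $m$-linear form. This is why your coefficient count $4^m$, your norm bound $\|B\|^m$, and the exponent $q=\frac{2mp}{mp+p-2m}$ cannot be combined to produce the claimed power of $2$; for odd $m$ (try $m=3$) there is no tensor-power construction of the right linearity at all, and for even $m$ the numbers do not match the statement.

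What the paper actually uses is the inductive family $T_{m,p}:(\ell_p^{2^{m-1}})^m\to\mathbb{R}$ from \cite{laa}, obtained by adding one variable at a time:
\[
T_{m,p}(x^{(1)},\dots,x^{(m)})=(x^{(m)}_1+x^{(m)}_2)\,T_{m-1,p}(\,\cdot\,)+(x^{(m)}_1-x^{(m)}_2)\,T_{m-1,p}(B^{2^{m-1}}x^{(1)},\dots).
\]
This has $4^{m-1}$ nonzero $\pm1$ coefficients, and the norm is controlled by the elementary inequality $|a+b|+|a-b|=2\max(|a|,|b|)\le 2\|x\|_p$, which gives $\|T_{m,p}\|\le 2\|T_{m-1,p}\|\le 2^{m-2}\|T_{2,p}\|$. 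No tensor-norm submultiplicativity on $\ell_p$ (the obstacle you flagged in step (4)) is ever needed. Dividing $(4^{m-1})^{1/q}$ by $2^{m-2}\|T_{2,p}\|$ and simplifying yields the stated exponent. Your step (3) is also superfluous: the paper works directly in the $\ell_p^n$ formulation with $x_{i_k}=e_{i_k}$, so weak-$p^\ast$ norms never appear.
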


\begin{proof}
For a given Banach space $E$ we know that $\Psi :\mathcal{L}\left( ^{2}E;%
\mathbb{R}\right) \rightarrow \mathcal{L}\left( E;E^{\ast }\right) $ given
by $\Psi (T)(x)(y)=T\left( x,y\right) $ is an isometric isomorphism. For $%
E=\ell _{p}^{2}$ and using the characterization of the dual of $\ell
_{p}^{2} $, we conclude that for the bilinear form
\begin{equation*}
\begin{array}{ccccl}
T_{2,p} & : & \ell _{p}^{2}\times \ell _{p}^{2} & \rightarrow & \mathbb{R}
\\
&  & ((x_{i}^{(1)}),(x_{i}^{(2)})) & \mapsto &
x_{1}^{(1)}x_{1}^{(2)}+x_{1}^{(1)}x_{2}^{(2)}+x_{2}^{(1)}x_{1}^{(2)}-x_{2}^{(1)}x_{2}^{(2)},%
\end{array}%
\end{equation*}%
we have
\begin{equation*}
\begin{array}{ccccl}
\Psi (T_{2,p}) & : & \ell _{p}^{2} & \rightarrow & \ell _{p^{\ast }}^{2} \\
&  & (x_{i}) & \mapsto & (x_{1}+x_{2},x_{1}-x_{2}).%
\end{array}%
\end{equation*}%
Since $p\geq 2m$ and $m\geq 2$, using the best constants from the Clarkson's
inequality in the real case (see \cite[Theorem 2.1]{mali}) we know the norm
of the linear operator $\Psi (T_{2,p})$ (and consequently the norm of the
bilinear form $T_{2,p}$), i.e.,
\begin{equation*}
\Vert T_{2,p}\Vert =\left\Vert \Psi (T_{2,p})\right\Vert =\sup_{x\in \lbrack
0,1]}\frac{((1+x)^{p^{\ast }}+(1-x)^{p^{\ast }})^{\frac{1}{p^{\ast }}}}{%
(1+x^{p})^{1/p}}.
\end{equation*}%
Now, as in \cite{laa}, we define inductively
\begin{equation*}
\begin{array}{rccl}
T_{m,p}: & \ell _{p}^{2^{m-1}}\times \cdots \times \ell _{p}^{2^{m-1}} &
\rightarrow & \mathbb{R} \\
& (x^{(1)},...,x^{(m)}) & \mapsto &
(x_{1}^{(m)}+x_{2}^{(m)})T_{m-1,p}(x^{(1)},...,x^{(m)}) \\
&  &  &
+(x_{1}^{(m)}-x_{2}^{(m)})T_{m-1,p}(B^{2^{m-1}}(x^{(1)}),...,B^{2}(x^{(m-1)})),%
\end{array}%
\end{equation*}%
where $x^{(k)}=(x_{j}^{(k)})_{j=1}^{{2^{m-1}}}\in \ell _{p}^{2^{m-1}}$, $%
1\leq k\leq m$, and $B$ is the backward shift operator in $\ell
_{p}^{2^{m-1}}$ and, again as in \cite{laa}, we conclude that
\begin{align*}
|T_{m,p}(x^{(1)},...,x^{(m)})|& \leq
|x_{1}^{(m)}+x_{2}^{(m)}||T_{m-1,p}(x^{(1)},...,x^{(m)})| \\
&
+|x_{1}^{(m)}-x_{2}^{(m)}||T_{m-1,p}(B^{2^{m-1}}(x^{(1)}),B^{2^{m-2}}(x^{(2)}),...,B^{2}(x^{(m-1)}))|
\\
& \leq \Vert T_{m-1,p}\Vert
(|x_{1}^{(m)}+x_{2}^{(m)}|+|x_{1}^{(m)}-x_{2}^{(m)}|) \\
& \leq 2\Vert T_{m-1,p}\Vert \Vert x^{(m)}\Vert _{p},
\end{align*}%
i.e.,
\begin{equation*}
\Vert T_{m,p}\Vert \leq 2^{m-2}\Vert T_{2,p}\Vert .
\end{equation*}
\end{proof}

Now we have
\begin{equation*}
(4^{m-1})^{\frac{mp+p-2m}{2mp}}=\left(
\sum_{j_{1},...,j_{m}=1}^{2^{m-1}}\left\vert
T_{m,p}(e_{j_{1}},...,e_{j_{m}})\right\vert ^{\frac{2mp}{mp+p-2m}}\right) ^{%
\frac{mp+p-2m}{2mp}}\leq C_{\mathbb{R},m,p}2^{m-2}\Vert T_{2,p}\Vert
\end{equation*}%
and thus
\begin{equation*}
C_{\mathbb{R},m,p}\geq \frac{(4^{m-1})^{\frac{mp+p-2m}{2mp}}}{2^{m-2}\Vert
T_{2,p}\Vert }=\frac{2^{2\left( m-1\right) \left( \frac{mp+p-2m}{2mp}\right)
-\left( m-2\right) }}{\sup_{x\in \lbrack 0,1]}\frac{\left( \left( 1+x\right)
^{p^{\ast }}+\left( 1-x\right) ^{p^{\ast }}\right) ^{1/p^{\ast }}}{\left(
1+x^{p}\right) ^{1/p}}}
\end{equation*}

When $m=2,$ using estimates of \cite[page 1369]{mali}, note that%
\begin{align*}
C_{\mathbb{R},2,4}& \geq \frac{2}{\sqrt{3}}>1.1546 \\
C_{\mathbb{R},2,8}& \geq \frac{2^{\frac{5}{4}}}{1.892}>1.2570 \\
C_{\mathbb{R},2,p}& \geq \frac{2^{\frac{2mp+2m-p-2m^{2}}{mp}}}{1.9836}>1.3591%
\text{ for }p=1+\log _{9/10}1/19 \\
C_{\mathbb{R},2,p}& \geq \frac{2^{\frac{2mp+2m-p-2m^{2}}{mp}}}{1.9999}>1.4105%
\text{ for }p=1+\log _{99/100}1/199.
\end{align*}%
Using the old estimates of \cite{laa} for $p>2m$ (i.e., $C_{\mathbb{R}%
,m,p}\geq 2^{\frac{mp+2m-2m^{2}-p}{mp}}$) we can easily verify that the old
estimates are worse. Also, in the old estimates we have no closed formula
for the case $p=2m.$

\begin{remark}
One may try to use the complex Clarkson's inequalities to obtain nontrivial
lower bounds for the constants of the complex Hardy-Littlewood inequality.
But, this is not effective, since we just get trivial lower bounds, i.e., $1$%
.
\end{remark}

\begin{remark}[The case $m<p<2m$]
\label{988}There is also a version of Hardy--Littlewood's inequality for $%
m<p<2m,$ due to Dimant and Sevilla-Peris (\cite{dimant} and the forthcoming
Section 6). In this case, the optimal exponent is $\frac{p}{p-m}$ and we
still denote the optimal constant for this inequality by $C_{\mathbb{K},m,p}$%
. The best information we have so far for the lower estimates for the
constant $C_{\mathbb{R},m,p}$ are trivial, that is,
\begin{equation*}
1\leq C_{\mathbb{R},m,p}\leq (\sqrt{2})^{m-1}.
\end{equation*}%
Similarly to the argument used in the proof of the Theorem \ref{popkhtr}, we
can also provide a closed formula (which depends on $p$) for the lower
bounds of $C_{\mathbb{R},m,p}$, but in this case, we do not always have
nontrivial information. More precisely, we prove that
\begin{equation*}
C_{\mathbb{R},m,p}\geq \frac{2^{\frac{mp+2m-2m^{2}}{p}}}{\sup_{x\in \lbrack
0,1]}\frac{((1+x)^{p^{\ast }}+(1-x)^{p^{\ast }})^{\frac{1}{p^{\ast }}}}{%
(1+x^{p})^{\frac{1}{p}}}}.
\end{equation*}%
It is important to mention this case because, for suitable choices of $p$,
we get nontrivial lower estimates for $C_{\mathbb{R},m,p}$. For instance,
\begin{equation*}
C_{\mathbb{R},2,7/2}\geq 1.104,\ \ C_{\mathbb{R},3,28/5}\geq 1.025,\ \ \text{%
and}\ \ C_{\mathbb{R},100,199999/1000}\geq 1.003.
\end{equation*}%
This leads us to question the following: Would also be the optimal constants
of the Hardy--Littlewood inequality for $m<p<2m$ strictly greater than $1$?
\end{remark}

\bigskip

\section{First numerical estimates (using well-known multilinear forms)}

\bigskip Since the publication of \cite{diniz}, the family of $m$-linear
forms $T_{m}:\ell _{\infty }\times \cdots \times \ell _{\infty }$ defined
inductively by
\begin{equation}
T_{2}(x,y)=x_{1}y_{1}+x_{1}y_{2}+x_{2}y_{1}-x_{2}y_{2},  \label{t2}
\end{equation}%
\begin{align}
T_{3}(x,y,z)& =(z_{1}+z_{2})\left(
x_{1}y_{1}+x_{1}y_{2}+x_{2}y_{1}-x_{2}y_{2}\right)  \label{t3} \\
& +(z_{1}-z_{2})\left( x_{3}y_{3}+x_{3}y_{4}+x_{4}y_{3}-x_{4}y_{4}\right) ,
\notag
\end{align}%
\begin{align}
T_{4}(x,y,z,w)& =\left( w_{1}+w_{2}\right) \left(
\begin{array}{c}
(z_{1}+z_{2})\left( x_{1}y_{1}+x_{1}y_{2}+x_{2}y_{1}-x_{2}y_{2}\right) \\
+(z_{1}-z_{2})\left( x_{3}y_{3}+x_{3}y_{4}+x_{4}y_{3}-x_{4}y_{4}\right)%
\end{array}%
\right)  \label{t4} \\
& +\left( w_{1}-w_{2}\right) \left(
\begin{array}{c}
(z_{3}+z_{4})\left( x_{5}y_{5}+x_{5}y_{6}+x_{6}y_{5}-x_{6}y_{6}\right) \\
+(z_{3}-z_{4})\left( x_{7}y_{7}+x_{7}y_{8}+x_{8}y_{7}-x_{8}y_{8}\right)%
\end{array}%
\right)  \notag
\end{align}%
and so on, have been used to find lower estimates for Bohnenblust--Hille and
related inequalities (se also \cite{ooo}). In the context of the
Hardy--Littlewood inequalities we also have good results, but in the next
section we invent different multilinear forms that, in our context, provide
better estimates.

The numerical issue involved to obtain our estimates is the calculus of $%
\Vert T_{m}\Vert $ when $\ell _{\infty }$ is replaced by $\ell _{p}$ (in
this case we write $T_{m,p}$ instead of $T_{m}$)$.$ This task refers to a
typical nonlinear optimization problem subject to restrictions. Namely, we
want to find a global maximum of $|T_{m,2m}(x^{(1)},\ldots ,x^{(m)})|$ with $%
x^{(i)}\in B_{\ell _{2m}}$, $i=1,\ldots ,m$ for the operators (\ref{t2}), (%
\ref{t3}), (\ref{t4}), etc.

To perform this computer-aided calculus we use a couple of software:
multi-paradigm numerical computing environment called MATLAB (MATrix
LABoratory) (see \cite{gilat}) to specify the problem and a software library
for large-scale nonlinear optimization called Interior Point to solve it.
Mathematical details of the algorithm used by interior-point can be found in
several publications (see for instance \cite{Byrd,Byrd2,Waltz}).

As the interior-point algorithm is designed to find local solutions for a
given optimization problem starting from a initial data, it is necessary to
find all local solutions (all maxima) and take the greatest of them. This
can be done taking a reasonable distribution of starting points throughout
the domain of the operator.

Performing these calculations for $T_{m,2m}$, we obtain%
\begin{equation}
\begin{tabular}{|c|cl|}
\hline
& \multicolumn{1}{|c|}{} &  \\
$C_{\mathbb{R},2,4}>$ & $\frac{2}{1.74}$ & \multicolumn{1}{|l|}{$>1.149$} \\
&  & \multicolumn{1}{|l|}{} \\
$C_{\mathbb{R},3,6}>$ & $\frac{4}{3.29}$ & \multicolumn{1}{|l|}{$>1.215$} \\
& \multicolumn{1}{|c|}{} & \multicolumn{1}{|l|}{} \\
$C_{\mathbb{R},4,8}>$ & \multicolumn{1}{|c|}{$\frac{8}{6.40}$} & $>1.250$ \\
& \multicolumn{1}{|c|}{} &  \\
$C_{\mathbb{R},5,10}>$ & \multicolumn{1}{|c|}{$\frac{16}{12.60}$} & $>1.269$
\\
& \multicolumn{1}{|c|}{} &  \\
$C_{\mathbb{R},6,12}>$ & \multicolumn{1}{|c|}{$\frac{32}{25.00}$} & $>1.280$
\\
& \multicolumn{1}{|c|}{} &  \\
$C_{\mathbb{R},7,14}>$ & \multicolumn{1}{|c|}{$\frac{64}{49.47}$} & $>1.293$
\\
& \multicolumn{1}{|c|}{} &  \\
$C_{\mathbb{R},8,16}>$ & \multicolumn{1}{|c|}{$\frac{128}{98.36}$} & $>1.301$
\\
& \multicolumn{1}{|c|}{} &  \\
$C_{\mathbb{R},9,18}>$ & \multicolumn{1}{|c|}{$\frac{256}{195.81}$} & $%
>1.\,\allowbreak 307.$ \\
& \multicolumn{1}{|c|}{} &  \\ \hline
\end{tabular}
\label{dell99}
\end{equation}

\section{New multilinear forms and better estimates\label{ujp}}

Up to now the best known multilinear forms to use in order to find lower
bounds for the Bohnenblust--Hille and Hardy--Littlewood inequalities were
those defined in (\ref{t2}), (\ref{t3}), (\ref{t4}) and so on. Now we show
that for $m=4,8,16,...$ we get better estimates using slightly different
multilinear forms and numerical computation. Define
\begin{equation*}
\widetilde{T}_{2}(x,y)=x_{1}y_{1}+x_{1}y_{2}+x_{2}y_{1}-x_{2}y_{2},
\end{equation*}%
\begin{align*}
\widetilde{T}_{4}(x,y,z,w)& =\left(
x_{1}y_{1}+x_{1}y_{2}+x_{2}y_{1}-x_{2}y_{2}\right) \left(
z_{1}w_{1}+z_{1}w_{2}+z_{2}w_{1}-z_{2}w_{2}\right) \\
& +\left( x_{1}y_{1}+x_{1}y_{2}+x_{2}y_{1}-x_{2}y_{2}\right) \left(
z_{3}w_{3}+z_{3}w_{4}+z_{4}w_{3}-z_{4}w_{4}\right) \\
& +\left( x_{3}y_{3}+x_{3}y_{4}+x_{4}y_{3}-x_{4}y_{4}\right) \left(
z_{1}w_{1}+z_{1}w_{2}+z_{2}w_{1}-z_{2}w_{2}\right) \\
& -\left( x_{3}y_{3}+x_{3}y_{4}+x_{4}y_{3}-x_{4}y_{4}\right) \left(
z_{3}w_{3}+z_{3}w_{4}+z_{4}w_{3}-z_{4}w_{4}\right) ,
\end{align*}

\begin{eqnarray*}
\widetilde{T}_{8}(x,y,z,w,r,s,t,u) &=&\widetilde{T}_{4}(x,y,z,w)\widetilde{T}%
_{4}(r,s,t,u) \\
&&+\widetilde{T}_{4}(x,y,z,w)\widetilde{T}_{4}(B^{4}\left( r\right)
,B^{4}\left( s\right) ,B^{4}\left( t\right) ,B^{4}\left( u\right) ) \\
&&+\widetilde{T}_{4}(B^{4}(x),B^{4}(y),B^{4}(z),B^{4}(w))\widetilde{T}%
_{4}(r,s,t,u) \\
&&-\widetilde{T}_{4}(B^{4}(x),B^{4}(y),B^{4}(z),B^{4}(w))\widetilde{T}%
_{4}(B^{4}\left( r\right) ,B^{4}\left( s\right) ,B^{4}\left( t\right)
,B^{4}\left( u\right) ),
\end{eqnarray*}%
and so on (recall that $B^{4}$ is the shift operator, as defined before).
Using $\widetilde{T}_{4},\widetilde{T}_{8},$ etc, we obtain%
\begin{equation}
\begin{tabular}{|c|c|l|}
\hline
&  &  \\
$C_{\mathbb{R},4,8}>$ & $\frac{2^{3}}{6.20}$ & $>1.290$ \\
&  &  \\
$C_{\mathbb{R},8,16}>$ & $\frac{2^{7}}{91.48}$ & $>1.399$ \\
&  &  \\
$C_{\mathbb{R},16,32}>$ & $\frac{2^{15}}{22137.70}$ & $>1.\,\allowbreak 480,$
\\
&  &  \\ \hline
\end{tabular}
\label{t44}
\end{equation}

and this procedure seems clearly better than the former.

\section{On the generalized Hardy--Littlewood inequality\label{ggee}}

The main goal in this section is to provide optimal constants for some cases
of three-linear forms in the recently extended version of the
Hardy--Littlewood inequality, presented in \cite{n, dimant}: \bigskip

\textbf{Theorem (Generalized Hardy--Littlewood inequality).} If\textit{\ }$%
m\geq 2$ is a positive integer,\linebreak $2m\leq p\leq \infty $ \textit{and}
$\mathbf{q}:=(q_{1},...,q_{m})\in \left[ \frac{p}{p-m},2\right] ^{m}$ are
\textit{such that}
\begin{equation}
\frac{1}{q_{1}}+\cdots +\frac{1}{q_{m}}=\frac{mp+p-2m}{2p},  \label{ppoo}
\end{equation}%
t\textit{hen there exists a constant} $C_{m,p,\mathbf{q}}^{\mathbb{K}}\geq 1$
\textit{such that}
\begin{equation}
\left( \sum_{j_{1}=1}^{n}\left( \sum_{j_{2}=1}^{n}\left( \cdots \left(
\sum_{j_{m}=1}^{n}\left\vert T(e_{j_{1}},...,e_{j_{m}})\right\vert
^{q_{m}}\right) ^{\frac{q_{m-1}}{q_{m}}}\cdots \right) ^{\frac{q_{2}}{q_{3}}%
}\right) ^{\frac{q_{1}}{q_{2}}}\right) ^{\frac{1}{q_{1}}}\leq C_{m,p,\mathbf{%
q}}^{\mathbb{K}}\left\Vert T\right\Vert  \label{g33}
\end{equation}%
\textit{for all continuous }$m$\textit{--linear forms }$T:\ell
_{p}^{n}\times \cdots \times \ell _{p}^{n}\rightarrow \mathbb{K}$\textit{\
and all positive integers }$n.$\textit{\ } %

\bigskip The case $p=\infty $ recovers the so called generalized
Bohnenblust--Hille inequality (see \cite{alb}) and when $p=\infty $ and $%
q_{1}=\cdots =q_{m}=\frac{2m}{m+1}$ we recover the classical
Bohnenblust--Hille inequality. The optimal constants $C_{m,p,\mathbf{q}}^{%
\mathbb{K}}$ are known in very few cases, namely

(i) $p=\infty $ and $(q_{1},...,q_{m})=\left( 1,2,...,2\right) $. In this
case (see \cite[Theorem 2.1]{ooo}) $C_{m,\infty ,\mathbf{q}}^{\mathbb{R}%
}=\left( \sqrt{2}\right) ^{m-1}$ for all $m\geq 2;$

(ii) \ $\left( m,p\right) =\left( 2,\infty \right) $ and no restriction on $%
q_{1},q_{2}.$ In this case (see \cite[Theorem 6.3]{alb}) $C_{2,\infty ,%
\mathbf{q}}^{\mathbb{R}}=\sqrt{2}.$

\bigskip


In these two cases these optimal constants are obtained by using special
multilinear forms to find lower bounds that match exactly with the known
upper bounds of the constants. This approach seems to be not effective in
other cases, but we do not know if the reason is a fault of the method or a
weakness of our estimates of upper bounds (i.e, maybe the known upper bounds
are not good enough). Using this technique it was recently shown in \cite[%
Theorem 2.3]{ooo} that for a constant $\alpha \in \lbrack 1,2]$ and a
multiple exponent $\mathbf{q}=(\alpha ,\frac{2\alpha m-2\alpha }{\alpha
m-2+\alpha },...,\frac{2\alpha m-2\alpha }{\alpha m-2+\alpha }),$ we have
\begin{equation}
C_{m,\infty ,\mathbf{q}}^{\mathbb{R}}\geq 2^{\frac{2m-\alpha m-4+3\alpha }{%
2\alpha }}.  \label{jfapel}
\end{equation}%
%
%
%
%
%
%
%
%
%
%
%
%
%
%
%
%
%
%
%
%
%
%
%
%
%
%
%
%
%
%
%
%
%
%
%
%
%
%
%
%
%
%
%
%
%
%
%
%
%
%
%
%
%
%
%
%
%
%
%
%

By using the Minkowski inequality, we obtain that for $\mathbf{q}=(\frac{%
2\alpha m-2\alpha }{\alpha m-2+\alpha },...,\frac{2\alpha m-2\alpha }{\alpha
m-2+\alpha },\alpha )$, with $\alpha >\frac{2m}{m+1}$ the estimate (\ref%
{jfapel}) gives us

\begin{equation*}
C_{m,\infty ,\mathbf{q}}^{\mathbb{R}}\geq 2^{\frac{2m-\alpha m-4+3\alpha }{%
2\alpha}}.
\end{equation*}

\medskip

In this section, we show that for a constant $\alpha \in \lbrack 1,2]$ and a
$\mathbf{q}=(\frac{2\alpha m-2\alpha }{\alpha m-2+\alpha },...,\frac{2\alpha
m-2\alpha }{\alpha m-2+\alpha },\alpha )$ we have
\begin{equation}
C_{m,\infty ,\mathbf{q}}^{\mathbb{R}}\geq 2^{\frac{3\alpha m-2m-5\alpha +4}{%
2\alpha \left( m-1\right) }}.  \label{thispel}
\end{equation}

For $\alpha >\frac{2m}{m+1}$ the new estimate is strictly bigger (and thus
better) than the previous.

When $m=3$ and $\alpha =2$ we obtain $C_{3,\infty ,\mathbf{q}}^{\mathbb{R}%
}\geq 2^{3/4}$ and since we already know (see \cite[Lemma 2.1]{adv22}) that $%
C_{3,\infty ,\mathbf{q}}^{\mathbb{R}}\leq 2^{3/4}$, we conclude that the
optimal constant is $2^{3/4}.$

The result proved here is:

\begin{proposition}
Let $\alpha \in \lbrack 1,2]$ be a constant and $\mathbf{q}=(\beta
_{m},...,\beta _{m},\alpha )$ be a multiple exponent of the generalized
Bohnenblust--Hille inequality for real scalars. Then
\begin{equation*}
C_{m,\infty ,\mathbf{q}}^{\mathbb{R}}\geq 2^{\frac{3\alpha m-2m-5\alpha +4}{%
2\alpha \left( m-1\right) }}.
\end{equation*}
\end{proposition}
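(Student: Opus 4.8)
The plan is to construct an explicit family of $m$-linear forms on $\ell_\infty$ whose norm we can control, and then evaluate the left-hand side of the generalized Bohnenblust--Hille inequality \eqref{g33} (with $p=\infty$) on the canonical vectors to produce the lower bound. The natural candidates are the forms $\widetilde{T}_m$ introduced in Section \ref{ujp} (for $m=2,4,8,\dots$), or more precisely the inductive family built from $\widetilde T_2(x,y)=x_1y_1+x_1y_2+x_2y_1-x_2y_2$ (which has $\|\widetilde T_2\|=2$ as a form on $\ell_\infty^2$), since these are the ones designed to interact well with the mixed-norm summation. First I would record that $\|\widetilde T_m\|\le 2^{m-1}$ (or the appropriate bound coming from the inductive definition via the backward shift, exactly as in the norm estimate $\|T_{m,p}\|\le 2^{m-2}\|T_{2,p}\|$ proved in Theorem \ref{popkhtr}), and that $\widetilde T_m$ takes values in $\{-1,0,1\}$ on tuples of canonical basis vectors, with exactly $4^{m-1}$ nonzero coefficients among the $(e_{j_1},\dots,e_{j_m})$.

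Next I would compute the mixed $(\beta_m,\dots,\beta_m,\alpha)$-norm of the coefficient array of $\widetilde T_m$. The key structural feature to exploit is that in each of the first $m-1$ coordinates the "fiber" (the set of indices where the coefficient is nonzero, for fixed values of the remaining coordinates) has a fixed cardinality, so the inner $\ell_{\beta_m}$-sums collapse to powers of that cardinality; only the last coordinate, summed in $\ell_\alpha$, behaves differently because $\widetilde T_m$ was built to concentrate its dependence there. Carrying this out, the left-hand side of \eqref{g33} evaluated at $\widetilde T_m$ should come out to $2^{a}$ for an explicit exponent $a=a(m,\alpha)$ determined by how the $4^{m-1}$ nonzero $\pm1$ entries distribute across the iterated sums; dividing by the norm bound $2^{m-1}$ and simplifying the resulting exponent is where one recovers $2^{\frac{3\alpha m-2m-5\alpha+4}{2\alpha(m-1)}}$. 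I would double-check the exponent by specializing: $m=3,\alpha=2$ must give $2^{3/4}$, and the general shape should reduce, when $\alpha=\frac{2m}{m+1}$, to the classical Bohnenblust--Hille situation where the bound is the expected power of $\sqrt2$.

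The main obstacle I expect is the bookkeeping in the mixed-norm computation: because the summation exponents are unequal and nested, one cannot simply count nonzero terms, and one must track how the "block structure" of $\widetilde T_m$ (the doubling in dimension at each inductive step, together with the shift) interacts with taking the $\ell_{\beta_m}$-norm in coordinates $1,\dots,m-1$ and the $\ell_\alpha$-norm in coordinate $m$. The cleanest route is probably an induction on $m$ (through the powers of two), proving simultaneously the norm bound and the value of the mixed-norm of the coefficient tensor, so that the inductive hypothesis feeds the factor $\widetilde T_{m/2}\widetilde T_{m/2}$-type decomposition directly; a secondary subtlety is that $\beta_m=\frac{2\alpha m-2\alpha}{\alpha m-2+\alpha}$ lies in the admissible range $[\frac{p}{p-m},2]^m=[1,2]^m$ for $p=\infty$ precisely when $\alpha\in[1,2]$, so one should verify this hypothesis of the generalized inequality is met (it is, which is why the statement restricts to $\alpha\in[1,2]$). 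Once the exponent arithmetic is confirmed on the two test cases above, the proof is essentially a matter of writing the induction carefully.
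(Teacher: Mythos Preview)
Your approach has a genuine gap: you propose to use the forms $\widetilde{T}_m$ from Section~\ref{ujp}, but those are defined only for $m=2,4,8,\dots$, and you yourself plan an ``induction on $m$ (through the powers of two)''. The proposition, however, is asserted for every $m\ge 2$, and you even invoke $m=3$ as a sanity check. No amount of bookkeeping will extract the bound for $m=3,5,6,7,\dots$ from a family indexed by powers of two.

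The paper instead uses the forms $T_m$ from (\ref{t2})--(\ref{t4}), which are defined for \emph{all} $m$. The structural feature that makes the computation trivial is precisely the one you described but attributed to the wrong family: in $T_m$ the last variable only involves the two indices $i_m\in\{1,2\}$, so for each $(i_1,\dots,i_{m-1})$ in the support one has $\sum_{i_m=1}^{2}|T_m(e_{i_1},\dots,e_{i_m})|^{\alpha}=2$, and there are exactly $4^{m-1}/2=2^{2m-3}$ such tuples. Hence the mixed norm is
\[
\left(2^{2m-3}\cdot 2^{\beta_m/\alpha}\right)^{1/\beta_m},
\]
and dividing by $\|T_m\|=2^{m-1}$ and substituting $\beta_m=\frac{2\alpha(m-1)}{\alpha m-2+\alpha}$ gives the claimed exponent directly. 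No induction is needed at all; the whole proof is three lines once you pick the right family.
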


\begin{proof}
The $m$-linear operators that we will use are defined inductively as in (\ref%
{t2}), (\ref{t3}) and (\ref{t4}) . Since
\begin{align*}
{\left( \frac{\left( 2^{m-1}\right) ^{2}}{2}2^{\frac{1}{\alpha }\beta
_{m}}\right) ^{\frac{1}{\beta _{m}}}}& {=}\left( 2^{2m-3}2^{\frac{1}{\alpha }%
\beta _{m}}\right) ^{\frac{1}{\beta _{m}}}=\left( \sum\limits_{i_{1},\ldots
,i_{m-1}=1}^{2^{m-1}}\left( \sum\limits_{i_{m}=1}^{2}\left\vert
T_{m}(e_{i_{^{1}}},...,e_{im})\right\vert ^{\alpha }\right) ^{\frac{1}{%
\alpha }\beta _{m}}\right) ^{\frac{1}{\beta _{m}}} \\
& \leq C_{m,\infty ,\mathbf{q}}^{\mathbb{R}}\left\Vert T_{m}\right\Vert
\end{align*}%
and $\beta _{m}=\frac{2\alpha m-2\alpha }{\alpha m-2+\alpha }$ we conclude
that%
\begin{equation*}
C_{m,\infty ,\mathbf{q}}^{\mathbb{R}}\geq \frac{\left( 2^{2m-3}\left(
2\right) ^{\frac{1}{\alpha }\beta _{m}}\right) ^{\frac{1}{\beta _{m}}}}{%
2^{m-1}}=2^{\frac{3\alpha m-2m-5\alpha +4}{2\alpha \left( m-1\right) }}.
\end{equation*}
\end{proof}

\bigskip

\begin{theorem}
\label{optimal} The optimal constant of the generalized Bohnenblust--Hille
inequality for $m=3$ and $\mathbf{q}=(4/3,4/3,2)$ or $\mathbf{q}%
=(4/3,8/5,8/5)$ or $\mathbf{q}=(4/3,2,4/3)$ is $C_{3,\infty ,\mathbf{q}}^{%
\mathbb{R}}=2^{3/4}$.
\end{theorem}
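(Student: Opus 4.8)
The plan is to prove the three cases by establishing a matching lower bound $C_{3,\infty,\mathbf q}^{\mathbb R}\geq 2^{3/4}$ in each, since the upper bound $C_{3,\infty,\mathbf q}^{\mathbb R}\leq 2^{3/4}$ is already available from \cite[Lemma 2.1]{adv22}. Two of the three exponents are handled by the results just proved. For $\mathbf q=(4/3,8/5,8/5)$, apply \eqref{jfapel} with $m=3$ and $\alpha=4/3$: then $\frac{2\alpha m-2\alpha}{\alpha m-2+\alpha}=\frac{8-8/3}{4-2+4/3}=\frac{16/3}{10/3}=8/5$, so $\mathbf q=(4/3,8/5,8/5)$ and the exponent in \eqref{jfapel} is $2^{\frac{2m-\alpha m-4+3\alpha}{2\alpha}}=2^{\frac{6-4-4+4}{8/3}}=2^{3/4}$. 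For $\mathbf q=(4/3,2,4/3)$, apply the Proposition of this section (equivalently \eqref{thispel}) with $m=3$ and $\alpha=2$: then $\beta_m=\frac{2\alpha m-2\alpha}{\alpha m-2+\alpha}=\frac{12-4}{6-2+2}=8/6=4/3$, so $\mathbf q=(\beta_m,\beta_m,\alpha)=(4/3,4/3,2)$ rearranges (by the Minkowski step noted before the Proposition, which lets $\alpha$ move to any coordinate position when $\alpha>\frac{2m}{m+1}=3/2$) to $(4/3,2,4/3)$, and the exponent $2^{\frac{3\alpha m-2m-5\alpha+4}{2\alpha(m-1)}}=2^{\frac{18-6-10+4}{8}}=2^{6/8}=2^{3/4}$. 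Thus both $(4/3,8/5,8/5)$ and $(4/3,2,4/3)$ already have lower bound $2^{3/4}$.

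The remaining and genuinely new case is $\mathbf q=(4/3,4/3,2)$, and this is where the main work lies. Here $\alpha=2$ sits in the \emph{last} coordinate, so the rearrangement argument via Minkowski is not needed — but one must check whether \eqref{jfapel} or the Proposition directly delivers the bound. In fact the Proposition with $\alpha=2$, $m=3$ gives exactly the exponent vector $(\beta_3,\beta_3,\alpha)=(4/3,4/3,2)$ and the lower estimate $2^{3/4}$; so this case is covered verbatim by the Proposition of this section with no rearrangement at all. The plan therefore is: (1) instantiate the Proposition at $(m,\alpha)=(3,2)$ to get $C_{3,\infty,(4/3,4/3,2)}^{\mathbb R}\geq 2^{3/4}$; (2) instantiate \eqref{jfapel} at $(m,\alpha)=(3,4/3)$ to get $C_{3,\infty,(4/3,8/5,8/5)}^{\mathbb R}\geq 2^{3/4}$; (3) instantiate the Proposition (or \eqref{thispel}) at $(m,\alpha)=(3,2)$ together with the Minkowski permutation remark to get $C_{3,\infty,(4/3,2,4/3)}^{\mathbb R}\geq 2^{3/4}$; (4) invoke \cite[Lemma 2.1]{adv22} for the reverse inequality $C_{3,\infty,\mathbf q}^{\mathbb R}\leq 2^{3/4}$ in all three cases, and conclude equality.

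The one point requiring care — the ``main obstacle'' such as it is — is verifying that \cite[Lemma 2.1]{adv22} actually supplies the upper bound $2^{3/4}$ for each of the three exponent vectors and not merely for one of them; the generalized Bohnenblust--Hille constant can in principle depend on the ordering of $\mathbf q$, so one should confirm that the cited upper bound is stated (or follows by the standard Minkowski/Hölder interpolation between multiple exponents) uniformly across $(4/3,4/3,2)$, $(4/3,8/5,8/5)$, and $(4/3,2,4/3)$. Once that is in hand, each case is a one-line arithmetic substitution into a theorem already proved above, and the matching lower and upper bounds pin the optimal constant at $2^{3/4}$. No new estimates, multilinear forms, or computations are needed beyond those already in the excerpt.
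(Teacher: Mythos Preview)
Your treatment of $\mathbf q=(4/3,4/3,2)$ and $\mathbf q=(4/3,8/5,8/5)$ coincides with the paper's: the Proposition (that is, \eqref{thispel}) at $(m,\alpha)=(3,2)$ gives the first lower bound, \eqref{jfapel} at $(m,\alpha)=(3,4/3)$ gives the second, and \cite[Lemma~2.1]{adv22} supplies the matching upper bound $2^{3/4}$ in all three cases.

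The gap is in your handling of $\mathbf q=(4/3,2,4/3)$. You invoke a ``Minkowski permutation remark'' to move the exponent $\alpha=2$ from the last coordinate to the middle, but the paper contains no such statement: the Minkowski step preceding the Proposition only transfers the bound \eqref{jfapel} from $(\alpha,\beta_m,\dots,\beta_m)$ to $(\beta_m,\dots,\beta_m,\alpha)$, and in general Minkowski's inequality does \emph{not} let one permute the entries of $\mathbf q$ while keeping the mixed norm of a fixed tensor --- swapping exponents in Minkowski also swaps the summation indices. The optimal constants $C_{m,\infty,\mathbf q}^{\mathbb R}$ are not a priori invariant under permutations of $\mathbf q$, which is exactly why the theorem lists the three exponent vectors separately. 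The paper instead handles $(4/3,2,4/3)$ by a direct computation with the trilinear form $T_3$ of \eqref{t3}: one verifies
\[
\left(\sum_{i_1=1}^{4}\left(\sum_{i_2=1}^{4}\left(\sum_{i_3=1}^{2}\left\vert T_3(e_{i_1},e_{i_2},e_{i_3})\right\vert^{4/3}\right)^{3/2}\right)^{2/3}\right)^{3/4}=2^{11/4},
\]
and since $\Vert T_3\Vert=4$ this yields $C_{3,\infty,(4/3,2,4/3)}^{\mathbb R}\geq 2^{11/4}/2^{2}=2^{3/4}$. This explicit evaluation --- not a rearrangement argument --- is the missing ingredient in your proposal.
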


\begin{proof}
From \cite[Lemma 2.1]{adv22} we obtain, for $m$ and $\mathbf{q}$ satisfying
the hypotheses of the theorem, the estimate%
\begin{equation*}
C_{3,\infty ,\mathbf{q}}^{\mathbb{R}}\leq 2^{3/4}.
\end{equation*}%
Using (\ref{thispel}) we prove that for $\mathbf{q}=(4/3,4/3,2)$ we have%
\begin{equation*}
C_{3,\infty ,\mathbf{q}}^{\mathbb{R}}\geq 2^{3/4}
\end{equation*}%
\medskip and, finally, using (\ref{jfapel}) we show that \ for $\mathbf{q}%
=(4/3,8/5,8/5)$ we have
\begin{equation*}
C_{3,\infty ,\mathbf{q}}^{\mathbb{R}}\geq 2^{3/4}.
\end{equation*}%
On the other hand, using the operator $T_{3}$ (see (\ref{t3})) we have%
\begin{equation*}
2^{\frac{11}{4}}=\left( \sum\limits_{i_{1}=1}^{4}\left(
\sum\limits_{i_{2}=1}^{4}\left( \sum\limits_{i_{3}=1}^{2}\left\vert
T_{3}(e_{i_{^{1}}},e_{i_{2}},e_{i_{3}})\right\vert ^{4/3}\right) ^{\frac{3}{2%
}}\right) ^{\frac{2}{3}}\right) ^{\frac{3}{4}}\text{,}
\end{equation*}%
and thus for $\mathbf{q}=(4/3,2,4/3)$ we get
\begin{equation*}
C_{3,\infty ,\mathbf{q}}^{\mathbb{R}}\geq \frac{2^{\frac{11}{4}}}{2^{2}}=2^{%
\frac{3}{4}}\text{.}
\end{equation*}
\end{proof}

\bigskip

\section{The polynomial Hardy--Littlewood inequality\label{ii1}}

Let $E$ be a real or complex Banach space and $m$ be a positive integer and
let $\mathbb{K}$ be the real or complex scalar field. A map $P:E\rightarrow
\mathbb{K}$ is a homogeneous polynomial on $E$ of degree $m$ if there exists
a symmetric $m$-linear form $L$ on $E^{m}$ such that $P(x)=L(x,\ldots ,x)$
for all $x\in E$. We denote by ${\mathcal{P}}(^{m}E)$ the space of
continuous $m$-homogeneous polynomials on $E$ endowed with the usual norm
\begin{equation*}
\Vert P\Vert :=\sup \{|P(x)|:\left\Vert x\right\Vert =1\}.
\end{equation*}%
Observe that an $m$-homogeneous polynomial in ${\mathbb{K}}^{n}$ can be
written as
\begin{equation*}
P(x)={\sum\limits_{\left\vert \alpha \right\vert =m}}a_{\alpha }x^{\alpha },
\end{equation*}%
where $x=(x_{1},\ldots ,x_{n})\in {\mathbb{K}}^{n}$, $\alpha =(\alpha
_{1},\ldots ,\alpha _{n})\in ({\mathbb{N}}\cup \{0\})^{n}$, $|\alpha
|=\alpha _{1}+\cdots +\alpha _{n}$ and $x^{\alpha }=x_{1}^{\alpha
_{1}}\cdots x_{n}^{\alpha _{n}}$. We denote%
\begin{equation*}
\left\vert P\right\vert _{p}:=\left( {\sum\limits_{\left\vert \alpha
\right\vert =m}}\left\vert a_{\alpha }\right\vert ^{p}\right) ^{1/p}
\end{equation*}%
and%
\begin{equation*}
\left\vert P\right\vert _{\infty }:=\max \left\vert a_{\alpha }\right\vert .
\end{equation*}

\medskip

The polynomial Hardy--Littlewood inequality is:

\textbf{Theorem (Polynomial Hardy--Littlewood inequality).} For $m<p\leq
\infty $ there is a constant $D_{\mathbb{K},m,p}\geq 1$ such that

\begin{equation}
\begin{tabular}{clll}
$\left( {\sum\limits_{\left\vert \alpha \right\vert =m}}\left\vert a_{\alpha
}\right\vert ^{\frac{p}{p-m}}\right) ^{\frac{p-m}{p}}$ & $\leq $ & $D_{%
\mathbb{K},m,p}\left\Vert P\right\Vert $, & $\text{if }m<p\leq 2m,$ \\
&  &  &  \\
$\left( {\sum\limits_{\left\vert \alpha \right\vert =m}}\left\vert a_{\alpha
}\right\vert ^{\frac{2mp}{mp+p-2m}}\right) ^{\frac{mp+p-2m}{2mp}}$ & $\leq $
& $D_{\mathbb{K},m,p}\left\Vert P\right\Vert \text{,}$ & $\text{if }p\geq 2m$%
\end{tabular}
\label{6544}
\end{equation}%
for all positive integers $n$ and all $m$-homogeneous polynomials $P:\ell
_{p}^{n}\rightarrow \mathbb{K}$ given by
\begin{equation*}
P(x)={\sum\limits_{\left\vert \alpha \right\vert =m}}a_{\alpha }x^{\alpha }.
\end{equation*}

This is a consequence of the multilinear Hardy--Littlewood inequality,
previously described, and the following inequality also known as
Hardy--Littlewood inequality \cite{dimant}:

\bigskip

\textbf{Theorem (Hardy--Littlewood/Dimant--Sevilla-Peris).} For $m<p\leq 2m$%
, there is a constant $C_{\mathbb{K},m,p}\geq 1$ such that%
\begin{equation*}
\left( \sum_{i_{1},...,i_{m}=1}^{n}\left\vert T(e_{i_{1}},\ldots
,e_{i_{m}})\right\vert ^{\frac{p}{p-m}}\right) ^{\frac{p-m}{p}}\leq C_{%
\mathbb{K},m,p}\left\Vert T\right\Vert
\end{equation*}%
for all positive integers $n$ and all $m$-linear forms $T:\ell
_{p}^{n}\times \cdots \times \ell _{p}^{n}\rightarrow \mathbb{K}$.

\medskip

Above, the exponent $\frac{p}{p-m}$ is optimal and therefore in (\ref{6544})
both exponents $\frac{p}{p-m}$ and $\frac{2mp}{mp+p-2m}$ are optimal. The
case $p=\infty $ in the appropriate inequality of (\ref{6544}), is the
classical polynomial Bohnenblust--Hille inequality (see \cite{bh}).

\bigskip

From now on $D_{\mathbb{K},m,p}$ denotes the optimal constants satisfying (%
\ref{6544}). As in the multilinear case, the precise behaviour of the growth
of the constants $D_{\mathbb{K},m,p}$ is still unknown (partial results can
be found in \cite{campos, nunez1}). For instance, in \cite[Theorem 3.1]%
{campos} it is proved that for $p\geq 2m$ we have

\bigskip
\begin{equation*}
D_{\mathbb{R},m,p}\geq \left( \sqrt[16]{2}\right) ^{m}.
\end{equation*}

When $p=\infty $ we know that (see \cite{bohr, campos1})
\begin{eqnarray*}
\lim \sup_{m}D_{\mathbb{R},m,\infty }^{1/m} &=&2; \\
\lim \sup_{m}D_{\mathbb{C},m,\infty }^{1/m} &=&1.
\end{eqnarray*}

It will be convenient to define $H_{1}=\left\{ \left( p,m\right) \in \mathbb{%
R}\times \mathbb{N}:m<p<2m\right\} $ and $H_{2}=\left\{ \left( p,m\right)
\in \mathbb{R}\times \mathbb{N}:p\geq 2m\right\} $ with any total order. The
main results of this section are the following:

\begin{lemma}
\label{the:Q_2k} Let $j=1,2.$ Then
\begin{equation*}
\lim \sup_{H_{j}}D_{\mathbb{R},m,p}^{1/m}\geq 2.
\end{equation*}
\end{lemma}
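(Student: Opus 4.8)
The plan is to reduce the statement to the classical polynomial Bohnenblust--Hille asymptotics via a ``lifting'' of known lower bounds at $p=\infty$ into the finite-$p$ regime. First I would recall the known fact (Bohr--Boas, see \cite{bohr, campos1}) that $\limsup_m D_{\mathbb{R},m,\infty}^{1/m}=2$; concretely, for every $\varepsilon>0$ there is an $m$ and an $m$-homogeneous polynomial $P$ on $\ell_\infty^N$ (for some $N$) witnessing $|P|_{\frac{2m}{m+1}}\ge (2-\varepsilon)^m\|P\|_{\ell_\infty^N}$ (and analogously with the exponent $1$ in the Bohnenblust--Hille normalization). The key observation is that a homogeneous polynomial $P(x)=\sum_{|\alpha|=m}a_\alpha x^\alpha$ on $\ell_p^N$ has exactly the same coefficients whether we view the variables in $\ell_\infty^N$ or $\ell_p^N$; only the norm $\|P\|$ changes, and since $B_{\ell_p^N}\subset B_{\ell_\infty^N}$ we have $\|P\|_{\ell_p^N}\le\|P\|_{\ell_\infty^N}$. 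Hence any lower bound obtained with an $\ell_\infty$-polynomial is inherited, up to controlling how much the relevant $\ell_q$-sum of coefficients shrinks when we pass to the Hardy--Littlewood exponent appropriate to finite $p$.

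The second step is to make $p$ depend on $m$ so that the finite-$p$ exponent converges to the $p=\infty$ one while staying in the prescribed region $H_j$. For $H_2$ (the case $p\ge 2m$): take $p=p_m\to\infty$ with $p_m/m\to\infty$, e.g. $p_m=m^2$. Then the exponent $\frac{2mp_m}{mp_m+p_m-2m}\to\frac{2m}{m+1}$ as $m\to\infty$ in a controlled way, and for a fixed witnessing polynomial $P$ (with, say, $S$ nonzero monomials) the ratio between $|P|_{\frac{2mp_m}{mp_m+p_m-2m}}$ and $|P|_{\frac{2m}{m+1}}$ is at most $S^{|\frac{1}{\text{exp}_1}-\frac{1}{\text{exp}_2}|}$, which is $1+o(1)$ once the exponents are close. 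Combined with $\|P\|_{\ell_{p_m}^N}\le\|P\|_{\ell_\infty^N}$, this gives $D_{\mathbb{R},m,p_m}\ge(2-\varepsilon)^{m(1+o(1))}$ along a subsequence of $m$'s, hence $\limsup_{H_2}D_{\mathbb{R},m,p}^{1/m}\ge 2-\varepsilon$; letting $\varepsilon\downarrow 0$ finishes this case. For $H_1$ (the region $m<p<2m$): here the exponent is $\frac{p}{p-m}$, so I would choose $p=p_m$ with $p_m/m\to 1^{+}$ slowly enough that $\frac{p_m}{p_m-m}\to\infty$ but the coefficient-comparison factor still behaves; a cleaner route is to pick $p_m=2m-\delta_m$ with $\delta_m\to 0$, giving exponent $\frac{p_m}{p_m-m}\to 2=\frac{2m}{m+1}\big|_{m\to\infty}$ — no, more carefully, $\frac{p_m}{p_m-m}=\frac{2m-\delta_m}{m-\delta_m}\to 2$, which does \emph{not} match $\frac{2m}{m+1}\to 2$ uniformly in $m$; so instead I would let $p_m\to m^{+}$ and note the exponent tends to $\infty$, meaning we should compare against the $|P|_1$-version of the $p=\infty$ bound rather than the $|P|_{\frac{2m}{m+1}}$ one. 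Precisely: the polynomial Bohnenblust--Hille inequality also holds with exponent $1$ (with a possibly larger but still subexponential-in-$1/m$... in fact $2^{m(1+o(1))}$) constant, and for $p_m$ close to $m$ the finite exponent $\frac{p_m}{p_m-m}$ is large, so we compare $|P|_{\frac{p_m}{p_m-m}}$ with $|P|_{1}$ — but now the inequality goes the wrong way ($|P|_{r}\le|P|_{1}$ for $r\ge 1$), so one must instead compare with $|P|_\infty$ and pay a factor $S^{1/\text{exp}}$ which is harmless. I would fix the witnessing polynomials to have a uniformly controlled number of monomials so these factors are subexponential in the relevant scale.

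The main obstacle, and the point requiring the most care, is the interaction between three competing limits: $m\to\infty$, the exponent approaching the Bohnenblust--Hille exponent, and the number $S=S(P)$ of nonzero monomials of the witnessing polynomial (which typically grows with $m$). The comparison factor is $S^{|1/q_1-1/q_2|}$, so I need the exponent to approach the target \emph{fast enough relative to} $\log S$ that $S^{|1/q_1-1/q_2|}=2^{o(m)}$. This dictates how fast $p_m$ must grow (in $H_2$) or shrink toward $m$ (in $H_1$); since the known extremal polynomials achieving $D_{\mathbb{R},m,\infty}^{1/m}\to 2$ have $S$ bounded by a single exponential in $m$, choosing e.g. $p_m=m^2$ in $H_2$ makes $|1/q_1-1/q_2|=O(1/m)$, hence $S^{O(1/m)}=2^{O(1)}$ per coordinate, i.e. the $m$-th root contribution tends to $1$. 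So the estimate closes. A secondary, minor point is verifying that the pairs $(p_m,m)$ actually lie in $H_j$ and that the total order on $H_j$ does not interfere with the $\limsup$ (it does not, since $\limsup$ over an ordered set is $\ge$ the $\limsup$ along any sequence). Once these bookkeeping items are in place, the conclusion $\limsup_{H_j}D_{\mathbb{R},m,p}^{1/m}\ge 2$ follows; the reverse inequality (that it equals $2$) is not asserted here, so no upper bound work is needed.
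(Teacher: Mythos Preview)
Your approach differs from the paper's and is considerably more convoluted. The paper works directly with the explicit polynomials $Q_{2^k}^n$ and exploits two facts that short-circuit all of your bookkeeping: (i) $\|Q_{2^k}^n\|_{\ell_p}=1$ for \emph{every} $p\ge1$, and (ii) the bound imported from \cite{campos1} is on the \emph{maximum coefficient}, $|Q_{2^k}^n|_\infty\ge\bigl(\tfrac{2^n}{n+1}\bigr)^{2^k-1}$, which automatically lies below $|P|_q$ for every $q\ge1$. Together these give $D_{\mathbb{R},n2^k,p}\ge\bigl(\tfrac{2^n}{n+1}\bigr)^{2^k-1}$ uniformly in $p$ and in the Hardy--Littlewood exponent, so no exponent-matching, no monomial-counting, and no choice of a sequence $p_m$ is needed; the iterated limit $k\to\infty$ then $n\to\infty$ handles $H_1$ and $H_2$ simultaneously.

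By contrast, you start from the black-box statement $\limsup_m D_{\mathbb{R},m,\infty}^{1/m}=2$, which controls $|P|_{2m/(m+1)}$ rather than $|P|_\infty$, and must then pay a factor $S^{|1/q_1-1/\rho|}$ to change exponents. Your claim that the extremal polynomials have $S\le C^m$ monomials is not part of the cited statement; to justify it you would have to open up the proof of the $p=\infty$ case and identify the $Q_{2^k}^n$---at which point one may as well use the $|P|_\infty$ bound directly, as the paper does. More seriously, your $H_1$ argument is not completed. You consider $p_m=2m-\delta_m$, note that both $\rho=\tfrac{p_m}{p_m-m}$ and $q_1=\tfrac{2m}{m+1}$ tend to $2$, and then discard this choice. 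In fact $|1/q_1-1/\rho|\sim 1/(2m)$ here, so $S^{1/(2m)}$ is merely bounded and its $m$-th root tends to $1$: this choice \emph{does} work and you abandoned it prematurely. Your fallback (let $p_m\to m^+$, so $\rho\to\infty$, and ``compare with $|P|_\infty$'') fails as written, because converting the only bound you have cited---one on $|P|_{2m/(m+1)}$---into a bound on $|P|_\infty$ costs a factor $S^{(m+1)/(2m)}\sim S^{1/2}$, which for $S$ exponential in $m$ does not wash out after taking $m$-th roots.
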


\begin{proof}
Consider the sequence of norm-one $j$-homogeneous polynomials $Q_{j}:\ell
_{p}\rightarrow \mathbb{R}$ defined recursively by
\begin{align*}
Q_{2}(x_{1},x_{2})& =x_{1}^{2}-x_{2}^{2}, \\
Q_{2^{m}}(x_{1},\ldots ,x_{2^{m}})& =Q_{2^{m-1}}(x_{1},\ldots
,x_{2^{m-1}})^{2}-Q_{2^{m-1}}(x_{2^{m-1}+1},\ldots ,x_{2^{m}})^{2}.
\end{align*}%
From the proof of \cite[Theorem 3.1]{campos1}, we known that
\begin{equation}
|Q_{2^{m}}^{n}|_{\infty }\geq \left( \frac{2^{n}}{n+1}\right) ^{2^{m}-1}
\label{eq_m}
\end{equation}%
for every natural number $n,m$.

\noindent Next, since for every homogeneous polynomial $P$ we obviously have
\begin{equation*}
|P|_{p}\geq |P|_{\infty },
\end{equation*}%
from \eqref{eq_m} we conclude that
\begin{equation*}
D_{{\mathbb{R}},n2^{m},p}\geq \left( \frac{2^{n}}{n+1}\right) ^{2^{m}-1}.
\end{equation*}%
Note that

\begin{equation*}
D_{\mathbb{R},n2^{m},p}^{1/n2^{m}}\geq \left( \left( \frac{2^{n}}{n+1}%
\right) ^{2^{m}-1}\right) ^{\frac{1}{n2^{m}}}=\left( \frac{2^{n}}{n+1}%
\right) ^{\frac{2^{m}-1}{n2^{m}}}
\end{equation*}%
and making $m\rightarrow \infty $ we have%
\begin{equation*}
\left( \frac{2^{n}}{n+1}\right) ^{\frac{2^{m}-1}{n2^{m}}}\rightarrow \frac{2%
}{\left( n+1\right) ^{1/n}}
\end{equation*}%
and now making $n\rightarrow \infty $ we have%
\begin{equation*}
\frac{2}{\left( n+1\right) ^{1/n}}\rightarrow 2.
\end{equation*}
\end{proof}

\bigskip From now on we write%
\begin{eqnarray*}
\rho \left( p,m\right) &=&\frac{p}{p-m}\text{ if }m<p\leq 2m, \\
\rho \left( p,m\right) &=&\frac{2mp}{mp+p-2m}\text{ if }p\geq 2m.
\end{eqnarray*}

\medskip Now we prove the theorem:

\bigskip

\begin{theorem}
Let $j=1,2.$ At least one of the following two sentences hold true:
\end{theorem}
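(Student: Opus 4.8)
The plan is to read the dichotomy off Lemma~\ref{the:Q_2k}. For each $j\in\{1,2\}$ that lemma already gives $\limsup_{H_j}D_{\mathbb{R},m,p}^{1/m}\geq 2$. If this quantity is $>2$ (including the possibility that it is $+\infty$), the first sentence holds and there is nothing more to do; so the entire content is to show that the borderline case $\limsup_{H_j}D_{\mathbb{R},m,p}^{1/m}=2$ forces the second sentence. In other words, I would phrase the whole argument as: ``negating the first sentence leaves exactly the situation $\limsup_{H_j}D_{\mathbb{R},m,p}^{1/m}=2$, and in that situation the second sentence is automatic.''

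To carry this out I would revisit the norm-one polynomials $Q_{2^m}$ (and their iterates $Q_{2^m}^n$) from the proof of Lemma~\ref{the:Q_2k}: these have degree $n2^m$, are norm-one on $\ell_p^N$ for every $p$, and satisfy $|Q_{2^m}^n|_\infty\geq (2^n/(n+1))^{2^m-1}$. Since $|P|_{\rho(p,m)}\geq|P|_\infty$ for every homogeneous $P$, the polynomial Hardy--Littlewood inequality (\ref{6544}) applied at any point of $H_j$ of the form $(p,n2^m)$ yields $D_{\mathbb{R},n2^m,p}\geq (2^n/(n+1))^{2^m-1}$, i.e. exactly the estimate used in Lemma~\ref{the:Q_2k}. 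The reason $\rho(p,m)$ enters is precisely that the replacement of $|\cdot|_\infty$ by $|\cdot|_{\rho(p,m)}$ costs only a factor that becomes negligible after taking $m$-th roots; so under the assumption that the $\limsup$ equals $2$, this lower bound must be asymptotically sharp along the relevant cofinal sequences in $H_j$, and spelling out that sharpness is the second sentence.

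The step I expect to be the main obstacle is controlling the limiting procedure against the \emph{arbitrary} total order placed on $H_j$: since the $\limsup$ depends a priori on that order, one must check that the points $(p,n2^m)$ produced by the $Q_{2^m}^n$'s are cofinal for every total order, and that along every such cofinal sequence the exponent $\rho(p,m)$ stays in a range where the $|\cdot|_\infty\to|\cdot|_{\rho(p,m)}$ loss remains subexponential in $m$. This is most delicate near the boundaries of the two regions --- as $p\to 2m$, where $\rho(p,m)\to 2$ from both sides, and as $p\to m^+$ inside $H_1$, where $\rho(p,m)=p/(p-m)$ is unbounded. A secondary technical point is the degenerate endpoint of $H_1$: to keep the chain of inequalities valid there one must justify $|P|_\infty\leq\|P\|$ on $\ell_p^n$, which follows by evaluating $P$ at a suitable vertex of the unit ball.
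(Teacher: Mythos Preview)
Your proposal misidentifies what the two sentences are. In the paper they read
\[
\text{(a)}\quad \limsup_{H_{j}}D_{\mathbb{R},m,p}^{1/m}=2,
\qquad
\text{(b)}\quad \limsup_{H_{j}}D_{\mathbb{C},m,p}^{1/m}>1,
\]
so the dichotomy is between the \emph{real} limsup being exactly $2$ and the \emph{complex} limsup exceeding $1$. Consequently your case split is inverted: if the real limsup equals $2$ then (a) holds outright and there is nothing to prove, while if it is strictly larger than $2$ (which Lemma~\ref{the:Q_2k} permits), one must establish (b). Your plan treats the case $\limsup=2$ as the one requiring work and never introduces complex scalars at all, so it cannot reach (b).

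The missing idea is complexification. The paper's argument is: assume (a) fails, so by Lemma~\ref{the:Q_2k} the real limsup exceeds $2+\varepsilon$; pick witnessing real $m_{k}$-homogeneous polynomials $P_{m_{k}}$ with $D_{\mathbb{R},m_{k},p_{k}}>(2+\varepsilon)^{m_{k}}$; pass to the complexifications, using the standard bound $\left\Vert (P_{m_{k}})_{\mathbb{C}}\right\Vert\le 2^{m_{k}-1}\left\Vert P_{m_{k}}\right\Vert$, to obtain $D_{\mathbb{R},m_{k},p_{k}}\le 2^{m_{k}-1}D_{\mathbb{C},m_{k},p_{k}}$; taking $m_{k}$-th roots gives $D_{\mathbb{C},m_{k},p_{k}}^{1/m_{k}}>1+\varepsilon/2$, hence (b). The reverse implication (failure of (b) gives (a)) follows from the same inequality together with Lemma~\ref{the:Q_2k}. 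None of this requires revisiting $Q_{2^{m}}$, tracking $\rho(p,m)$, or worrying about cofinality in the total order; those concerns, which occupy most of your proposal, are not where the content lies.
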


(a) $\lim \sup_{H_{j}}D_{\mathbb{R},m,p}^{1/m}=2.$

(b) $\lim \sup_{H_{j}}D_{\mathbb{C},m,p}^{1/m}>1.$

\begin{proof}
Suppose that (a) is not true for some $j$. So (using the previous result) we
would have\linebreak $\lim \sup_{H_{j}}D_{\mathbb{R},m,p}^{1/m}>\left(
2+\varepsilon \right) >2.$ Therefore, for each $k\in \mathbb{N}$ there is $%
n_{k}\in \mathbb{N}$, $\left( p_{_{k}},m_{k}\right) $ $\in $ $H_{j}$ and a $%
m_{k}$-homogeneous polynomial $P_{m_{k}}:\ell _{p_{k}}^{n_{k}}\rightarrow
\mathbb{R}$ such that%
\begin{equation*}
\left( {\sum\limits_{\left\vert \alpha \right\vert =m_{k}}}\left\vert
a_{\alpha }\right\vert ^{\rho \left( p_{_{k}},m_{k}\right) }\right) ^{\frac{1%
}{\rho \left( p_{_{k}},m_{k}\right) }}\leq D_{\mathbb{R},m_{k},p_{k}}\left%
\Vert P_{m_{k}}\right\Vert ,
\end{equation*}%
with%
\begin{equation*}
D_{\mathbb{R},m_{k},p_{k}}>\left( 2+\varepsilon \right) ^{m_{k}}.
\end{equation*}%
Considering the complexification of $P_{m_{k}}$ we know that%
\begin{equation*}
\left\Vert \left( P_{m_{k}}\right) _{\mathbb{C}}\right\Vert \leq
2^{m_{k}-1}\left\Vert P_{m_{k}}\right\Vert
\end{equation*}%
and now looking for the complex polynomials $\left( P_{m_{k}}\right) _{%
\mathbb{C}}$ we would have%
\begin{eqnarray*}
\left( {\sum\limits_{\left\vert \alpha \right\vert =m_{k}}}\left\vert
a_{\alpha }\right\vert ^{\rho \left( p_{k},m_{k}\right) }\right) ^{\frac{1}{%
\rho \left( p_{k},m_{k}\right) }} &\leq &D_{\mathbb{C},m_{k},p_{k}}\left%
\Vert \left( P_{m_{k}}\right) _{\mathbb{C}}\right\Vert \\
&\leq &D_{\mathbb{C},m_{k},p_{k}}2^{m_{k}-1}\left\Vert P_{m_{k}}\right\Vert
\end{eqnarray*}%
and thus%
\begin{equation*}
D_{\mathbb{R},m_{k},p_{k}}\leq D_{\mathbb{C},m_{k},p_{k}}2^{m_{k}-1},
\end{equation*}%
i.e.,\bigskip\
\begin{equation*}
D_{\mathbb{R},m_{k},p_{k}}^{1/m_{k}}\leq D_{\mathbb{C}%
,m_{k},p_{k}}^{1/m_{k}}2^{\frac{m_{k}-1}{m_{k}}}\leq 2D_{\mathbb{C}%
,m_{k},p_{k}}^{1/m_{k}}.
\end{equation*}%
Now, since%
\begin{equation*}
D_{\mathbb{R},m_{k},p_{k}}^{1/m_{k}}>2+\varepsilon
\end{equation*}%
we conclude that%
\begin{equation*}
D_{\mathbb{C},m_{k},p_{k}}^{1/m_{k}}>1+\frac{\varepsilon }{2}>1
\end{equation*}%
for all $k$, and thus
\begin{equation*}
\lim \sup_{H_{j}}D_{\mathbb{C},m,p}^{1/m}>1.
\end{equation*}%
Reciprocally, if (b) is not true for some $j$, then $\lim \sup_{H_{j}}D_{%
\mathbb{C},m,p}^{1/m}=1$ and thus $\lim \sup_{H_{j}}D_{\mathbb{R}%
,m,p}^{1/m}\leq 2$ and from the previous lemma we conclude that
\begin{equation*}
\lim \sup_{H_{j}}D_{\mathbb{R},m,p}^{1/m}=2.
\end{equation*}
\end{proof}

\begin{verbatim}

\end{verbatim}

\textbf{Acknowledgement.} This preprint is the result of the union and
improvement of two arXiv preprints of some of its authors: arXiv 1504.04207
by D.\ Pellegrino and arXiv 1503.00618. The preprint arXiv 1504.04207, as
now is incorporated to the present paper, does not exist as an independent
paper anymore. It is just part of the present new paper.


\end{document}